\newtheorem{thm}{Theorem}
\newtheorem*{thm*}{Theorem}
\newtheorem{cor}{Corollary}
\newtheorem{lem}{Lemma}
\newtheorem*{lem*}{Lemma}
\newtheorem{prop}{Proposition}
\newtheorem*{con*}{Conjecture}
\newtheorem*{prob*}{Problem}
\theoremstyle{definition}
\newtheorem{defn}{Definition}
\newtheorem*{ex*}{Example}
\newtheorem*{cons*}{Construction}
\theoremstyle{remark}
\newtheorem*{not*}{Notation}
\DeclareMathOperator{\lk}{lk}
\DeclareMathOperator{\st}{st}
\begin{document}
\title{Combinatorics of flag simplicial 3-polytopes.}%
\author{V.~D.~Volodin\thanks{This work is supported by the Russian Government project 11.G34.31.0053 and program 2 OM RAN.}}%
\date{}

\maketitle
\begin{abstract}
In the focus of this paper is the operation of edge contraction. One can show that simplicial 3-polytope is flag iff contraction of any its edge gives simplicial 3-polytope. 
Our main result states that any flag simplicial 3-polytope can be reduced to octahedron by sequence of edge contractions. Using this operation we introduce a partial order on the set of flag simplicial 3-polytopes and study Hasse graph of corresponding poset.  We estimate input and output degrees of vertices of this Hasse graph.
\end{abstract}

\section{Introduction and main definitions}
We consider simplicial polytopes of dimension 3. Boundary of such a polytope is a simplicial sphere of dimension 2. Inverse is also true, every 2-dimensional simplicial sphere is a boundary of some simplicial 3-polytope. So, we can consider combinatorics simplicial spheres and forget about their polytopal realization.

\begin{defn}
Simplicial complex is called \emph{flag}  if every its clique forms a simplex. Simplicial polytope is called \emph{flag} if its boundary is a flag simplicial complex.
\end{defn}

Simple polytopes are dual to simplicial ones. Simple polytope is called \emph{flag} if its dual simplicial polytope is flag. In terms of face lattice it can be formulated as following.

\begin{defn}
Simple polytope is called  \emph{flag} if every collection of its pairwise intersecting faces has a nonempty intersection.
\end{defn}

\begin{defn}
Contraction of edge $e=\{v_1,v_2\}$ in some simlicial complex $K$ is replacing the union of stars $\st_K v_1\cup \st_K v_2$ by the star $\st_K v$ of new vertex $v$. The obtained complex is denoted by $K/e$.
This operation also can be described as topological shrinking of the edge $e$, identifying multiple edges and removing degenerated faces. There exists a natural simplicial map $K\to K/e$ which merges vertiices $v_1$ and $v_2$ into the vertex $v$ and is bijective on the complements to these vertices.
\end{defn}

Operation of edge contraction on simplicial complexes was actively studied in \cite{DEGN, Me}. In \cite{DEGN} there was formulated condition that provides same topological type of resulting complex. The main result of the present paper is following.

\begin{thm*}
Let $P$ be a flag simplicial 3-polytope. Then, $P$ can be reduced to octahedron by sequence of edge contractions.
\end{thm*}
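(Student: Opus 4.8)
The plan is to induct on the number $n$ of vertices of the boundary sphere of $P$, the inductive step being to exhibit an edge $e$ with $P/e$ again flag; then $P/e$, having one fewer vertex, reduces to the octahedron by induction. I would first record the preliminaries. A flag simplicial $2$-sphere has minimum vertex degree $\geq 4$: a vertex of degree $3$ has a $3$-cycle as its link, whose three vertices form a clique that by flagness must be a face, forcing $P$ to be the boundary of a tetrahedron --- which is not flag (its whole vertex set is a clique but not a face). Hence the only flag sphere on $6$ vertices is the octahedron (for then $\sum\deg = 2(3n-6)=24=4n$ forces all degrees $4$, and the octahedron is the unique such triangulation), and there is none on fewer vertices; this is the base case. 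Next, at every edge $e=\{u,w\}$ of a flag sphere the link condition holds: the common neighbours of $u$ and $w$ are exactly the two apices $a,b$ of the two faces on $e$ (a third common neighbour would be an empty triangle), and $a,b$ are not joined in $\lk u$ or $\lk w$ since all degrees are $\geq 4$. So $P/e$ is always a simplicial $2$-sphere, hence the boundary of a polytope (every $2$-sphere is, as recalled above), and only flagness must be monitored. A direct inspection of the candidate new empty triangles of $P/e$ --- all of which must pass through the merged vertex --- yields the criterion: $P/e$ is flag $\iff$ (i) $a\not\sim b$ in $P$, and (ii) no edge of $P$ joins $N(u)\setminus N[w]$ to $N(w)\setminus N[u]$.

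With this criterion, the whole matter reduces to the claim: \emph{a flag simplicial $3$-polytope on $n\geq 7$ vertices has an edge whose contraction stays flag.} I would prove it by looking at a vertex $v$ of minimal degree, which is $4$ or $5$ since the mean degree is $6-12/n<6$. Let $C=\lk v$ and let $D$ be the complementary triangulated disk, $\partial D=C$, so the vertices of $D$ off $C$ are precisely the vertices of $P$ outside $\{v\}\cup C$. Since $C$ is an induced cycle (a chord would create an empty triangle through $v$), criterion (i) is automatic for every spoke, and criterion (ii) reads: contracting $vx$ stays flag \emph{unless} some interior vertex of $D$ is adjacent to $x$ and to some vertex of $C$ at cyclic distance $2$ from $x$. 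So I must find a spoke for which no such ``diagonal pair'' of $C$ is blocked.

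For $\deg v=4$, write $C=(a,b,c,d)$; the only diagonal partner of $a$ is $c$, and that of $b$ is $d$. If both $va$ and $vb$ were bad I would obtain interior vertices $p\sim a,c$ and $q\sim b,d$. If $p=q$, then $p$ is joined to all of $a,b,c,d$ and cuts $D$ into four triangles, each of which must be a single face (an interior vertex would make the bounding triangle separating, hence an empty triangle), so $P$ is the octahedron and $n=6$ --- excluded. If $p\neq q$, the paths $a$--$p$--$c$ and $b$--$q$--$d$ lie in the disk $D$ with interleaving endpoints on $\partial D$, so by planarity they must share a vertex --- impossible, their vertex sets being disjoint. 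Hence one of $va,vb$ is good. For $\deg v=5$, write $C=(a_1,\dots,a_5)$. The five diagonal pairs $\{a_i,a_{i+2}\}$ form a $5$-cycle (two of them adjacent exactly when they share an $a_i$), and ``$va_i$ is bad'' means one of the two pairs through $a_i$ is blocked --- these two pairs forming an edge of that $5$-cycle. So if \emph{every} spoke is bad, the blocked pairs form a vertex cover of $C_5$, hence contain one of size $3$; running two successive crossing arguments in $D$ (as in the degree-$4$ case) then forces the three corresponding connector vertices to coincide in a single interior vertex of $D$ joined to all of $a_1,\dots,a_5$, and, as before, this makes $P$ the pentagonal bipyramid, whose equator vertices have degree $4$ --- contradicting minimality of $\deg v=5$. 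So some spoke is good, proving the claim and completing the induction.

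\textbf{Anticipated main obstacle.}
The hard part will be the degree-$5$ case: correctly setting up the auxiliary $5$-cycle on the diagonal pairs, verifying that ``every spoke bad'' really forces a size-$3$ vertex cover by blocked pairs, and then executing the chain of Jordan-curve crossing arguments inside the complementary disk so that the three connector vertices are provably identified and the resulting degree-$5$ interior vertex yields exactly the pentagonal bipyramid (not some larger configuration). The degree-$4$ case and the flagness criterion (i)--(ii) should be comparatively routine once one adopts the viewpoint that a spoke contraction is obstructed precisely by a length-$2$ path across the complementary disk joining two link-vertices at cyclic distance $2$.
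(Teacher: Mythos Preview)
Your argument is correct, and it reaches the goal by a genuinely different route from the paper.

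The paper packages your criterion (ii) as the statement that $K/e$ is flag iff $e$ lies in no \emph{belt} (an induced $4$-cycle); your condition (i) is, as you observe, automatic in a flag $2$-sphere. From there, however, the paper does \emph{not} look at a minimum-degree vertex. Instead it argues directly about a \emph{minimal} flag sphere $K$ (one in which every edge lies in a belt). The key step is a short lemma: starting from any vertex $v$, one repeatedly replaces a belt through $v$ by one whose ``inner'' disk is strictly smaller, until reaching a belt $\square_0$ through $v$ that is extremal in the sense that every belt through $v$ meeting the interior of one side must also meet the interior of the other side. One then checks that the vertex $w\in\square_0$ opposite $v$ has $\lk_K w=\square_0$, i.e.\ $\deg w=4$. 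Finally, two belts through $w$ (one being $\lk w$ itself) must share a second common vertex $v'$, and flagness forces $K$ to be the octahedron.

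So both proofs establish the same inductive step, but by different mechanisms. Your approach is a case analysis on $\deg v\in\{4,5\}$ for a minimum-degree vertex, with the work concentrated in Jordan-curve crossing arguments inside the complementary disk; the degree-$5$ case is handled via the auxiliary $C_5$ on diagonal pairs and a vertex-cover observation. The paper's approach is more structural: it isolates a single lemma (``a minimal flag sphere has a vertex with square link'') proved by an extremal-belt argument, after which the conclusion is almost immediate. Your method is more hands-on and perhaps more elementary, avoiding the somewhat delicate extremal-belt step; the paper's method is shorter once that lemma is in hand and makes the belt obstruction the organizing concept throughout. Both arguments ultimately exploit the same planarity constraint --- that certain length-$2$ chords across the disk with interleaving endpoints must share their interior vertex --- but the paper hides this inside the extremal choice of $\square_0$, whereas you invoke it explicitly.
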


In \cite{Me} there was proved that simplicial complex of dimension not greater than 2 can be obtained from any its geometric subdivision by sequence of edge contractions. So the above theorem could be proved by showing that boundary of any flag simplicial 3-polytope is geometric subdivision of the boundary of octahedron. But we give a direct proof based on the notion of \emph{belt} in simplicial polytope. 

If the initial simplicial complex is flag simplicial sphere, then obtained simplicial complex may fail to be flag simplicial sphere. After topological edge contraction boundary of the polytope becomes cell complex homeomorphic to sphere, Therefore,  the resulting complex may fail to be simplicial sphere only after identifying multiple edges and faces.

\begin{prop}
Simplicial sphere $K^2$ is a flag iff for any its edge $e$ the complex $K^2/e$ is still simplicial sphere.
\end{prop}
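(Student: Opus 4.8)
The plan is to recast flagness as the \emph{link condition} of \cite{DEGN}. Recall that for an edge $e=\{v_1,v_2\}$ of a simplicial complex $K$ the complex $K/e$ is simplicial by construction, and that by \cite{DEGN} the map $K\to K/e$ preserves topological type whenever $\lk_K v_1\cap\lk_K v_2=\lk_K e$. First I would observe that an arbitrary simplicial complex $K$ is flag if and only if $\lk_K v_1\cap\lk_K v_2=\lk_K\{v_1,v_2\}$ for every edge $\{v_1,v_2\}$. If $\sigma\in\lk v_1\cap\lk v_2$, then $\sigma\cup\{v_1\}$, $\sigma\cup\{v_2\}$ and $\{v_1,v_2\}$ are all faces, so the vertices of $\sigma\cup\{v_1,v_2\}$ are pairwise adjacent; flagness makes this clique a face, giving $\sigma\in\lk\{v_1,v_2\}$, and the reverse inclusion is automatic. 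Conversely, take a non-face clique $\{v_1,\dots,v_k\}$ of minimal size; then $k\ge3$ and $\{v_3,\dots,v_k\}$ lies in $\lk v_1\cap\lk v_2$ but not in $\lk\{v_1,v_2\}$. The ``only if'' half of the Proposition now follows: when $K^2$ is flag the link condition holds for every edge $e$, so $K^2/e$ is homeomorphic to $|K^2|=S^2$ and is simplicial, hence a simplicial $2$-sphere.

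For the ``if'' half I would prove the contrapositive. Assume $K^2$ is not flag and choose a non-face clique $W$ of minimal size; then every proper subset of $W$ is a face of the $2$-complex $K^2$, hence has at most $3$ elements, so $|W|\le4$, and as non-face cliques have at least $3$ elements, $|W|\in\{3,4\}$. If $|W|=4$, the four triangles spanned by $W$ are faces and generate a subcomplex isomorphic to $\partial\Delta^3$; since a simplicial $2$-sphere contains no proper subcomplex that is a closed surface, $K^2=\partial\Delta^3$, and contracting any edge of $\partial\Delta^3$ degenerates two of its four triangles and identifies the other two, leaving a single $2$-simplex, which is not a sphere. So we may assume $|W|=3$: $K^2$ has an \emph{empty triangle} $\{a,b,c\}$, meaning all three of $\{a,b\}$, $\{a,c\}$, $\{b,c\}$ are edges of $K^2$ but $\{a,b,c\}$ is not a face.

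It then remains to contract $e=\{a,b\}$ to a vertex $v$ and check that $K^2/e$ is not a sphere. As $\{a,b,c\}$ is not a face, $c$ is an apex of neither triangle on $e$, so no face through $c$ degenerates, while the distinct edges $\{a,c\}$ and $\{b,c\}$ both map to $\{v,c\}$. Write $\lk_{K^2}\{a,c\}=\{z_1,z_2\}$ and $\lk_{K^2}\{b,c\}=\{w_1,w_2\}$ (pairs of distinct vertices, none equal to $a,b,c$). Every triangle of $K^2/e$ containing $\{v,c\}$ is the image of a triangle of $K^2$ through $c$ and one of $a,b$, hence of a triangle on $\{a,c\}$ or on $\{b,c\}$; so these triangles are exactly the $\{v,c,u\}$ with $u\in\{z_1,z_2,w_1,w_2\}$. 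If $\{z_1,z_2\}\ne\{w_1,w_2\}$, then $\{v,c\}$ lies in at least three triangles of $K^2/e$. If $\{z_1,z_2\}=\{w_1,w_2\}$, then $\{a,c,z_1\}$ and $\{b,c,z_1\}$ are the two distinct triangles of $K^2$ on the edge $\{c,z_1\}$ and both map to $\{v,c,z_1\}$, so $\{c,z_1\}$ lies in only one triangle of $K^2/e$. In either case $K^2/e$ is not a closed surface, hence not a simplicial $2$-sphere, and the contrapositive is proved. The one delicate point is exactly this dichotomy: contracting an edge of an empty triangle merely ``identifies multiple edges,'' and one must rule out that these identifications still yield a sphere; the face counts around $\{v,c\}$ and $\{c,z_1\}$ do so, the rest being routine manipulation of vertex links in a simplicial $2$-sphere, where every vertex link is a cycle and every edge lies in exactly two triangles.
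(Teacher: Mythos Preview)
Your argument is correct. Both directions go through: the equivalence of flagness with the link condition for every edge is established cleanly, the appeal to \cite{DEGN} then handles the forward implication, and your contrapositive for the reverse implication is careful---in particular the dichotomy on $\{z_1,z_2\}$ versus $\{w_1,w_2\}$ correctly detects a non-manifold edge in $K^2/e$ in every case, and the $|W|=4$ subcase (forcing $K^2=\partial\Delta^3$) is handled explicitly.

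The paper proceeds quite differently. For the direction ``not flag $\Rightarrow$ some contraction fails,'' the paper simply asserts that contracting an edge of a missing triangle yields two spheres glued along the edge $\{v,v_3\}$; your case analysis is more careful here, since when $\{z_1,z_2\}=\{w_1,w_2\}$ the edge $\{v,c\}$ actually lies in only two triangles and the non-manifold behaviour appears instead at $\{c,z_1\}$---a subtlety the paper's one-line description suppresses. For the direction ``flag $\Rightarrow$ every contraction is a sphere,'' the paper argues by contradiction via Steinitz's theorem: if $K^2/e$ is not polytopal one locates an edge $e'\ni v$ whose removal disconnects $K^2/e$, and then the preimage of $e'$ is a missing triangle in $K^2$. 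Your route through the link condition and \cite{DEGN} avoids Steinitz entirely and has the advantage that the equivalence ``flag $\Leftrightarrow$ link condition for every edge'' is a general fact about simplicial complexes, not special to dimension~$2$; the paper's route, by contrast, gives a more geometric picture tied specifically to $2$-spheres and their polytopality.
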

\begin{proof}
If $K^2$ has missing face $\{v_1,v_2,v_3\}$, then contracting the edge $\{v_1,v_2\}$ into vertex $v$ we obtain a union of two spheres glued by common edge $\{v, v_3\}$.

If complex $K^2/e$ is not polytopal, then using Steinitz's theorem we can find an edge $e'$ containing the image of collapsed edge $e$, such that complex  $(K^2/e )\setminus e'$ is disconnected. Then, the preimage of the edge $e'$ is a missing face in complex $K^2$, which is flag by assumption.
\end{proof}

The inverse operation can be better described in dual terms. Edge contraction of simplicial polytope is dual to merging two intersecting facets of simple polytope by erasing edge between them. Operation inverse to erasing edge can be described as following. Let $P$ be a flag simple 3-polytope and $F$ be its facet. Lets pertube $F$ by two different ways and obtain two facets $F_1$ and $F_2$ intersecting in some edge $e$. Denote by $Q$ the obtained simple polytope. Notice, that polytope $P$ is obtained from polytope $Q$ by erasing the edge $e$. Let's call described operation by \emph{cutting facet} $F$ into facets $F_1$ and $F_2$.

\section{Partial order on flag simplicial 2-spheres}
Let's introduce partial order on the set of simplicial 2-spheres (or 3-polytopes). We assume that $P\preceq Q$ if $Q$ can be obtained from $P$ by sequence of edge contractions. Further we will study graph $\Gamma$ defined as Hasse diagram of the poset of flag simplicial 3-polytopes. Minimal polytopes with respect to this order are polytopes such that no edge can be contracted. Such polytopes correspond to initial vertices of $\Gamma$ and will be shortly called minimal.
 
\begin{thm}\label{edge_collapse_theorem}
Let $P^3$ be a flag simplicial 3-polytope. Then, $P^3$ can be reduced to octahedron by sequence of edge contractions.
\end{thm}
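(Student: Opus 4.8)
The plan is to induct on the number of vertices, using the notion of \emph{belt} to locate an edge whose contraction keeps the polytope flag (equivalently, by the Proposition, still a simplicial sphere). Recall that a missing face (empty triangle) is a $3$-cycle $\{v_1,v_2,v_3\}$ in the $1$-skeleton that does not bound a facet; flagness of $P^3$ means there are no missing faces. An edge $e=\{v_1,v_2\}$ may be contracted within the class of flag simplicial $3$-polytopes precisely when $\lk_{\partial P} v_1 \cap \lk_{\partial P} v_2$ consists of exactly two vertices (the two common neighbours forming the two facets on $e$) — a third common neighbour $v_3$ would be either a missing face of $P^3$ (contradicting flagness) or, after contraction, would create one. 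So the core combinatorial task is: in any flag simplicial $2$-sphere with more vertices than the octahedron, find an edge $e$ such that $v_1$ and $v_2$ have no common neighbour besides the obvious two, and such that contracting $e$ does not drop us below the octahedron or out of flagness.

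First I would set up the base case and the degree bookkeeping. The octahedron is the unique flag simplicial $2$-sphere in which every vertex has degree $4$; I would note that in a flag simplicial $2$-sphere every vertex has degree $\ge 4$ (a degree-$3$ vertex $v$ would make the triangle on $\lk v$ a missing face unless the sphere is the boundary of a single tetrahedron, which is not flag). So if $P^3$ is flag and not the octahedron, some vertex $w$ has degree $\ge 5$. Next I would bring in belts: a belt is a cycle in $\partial P$ such that no two of its vertices are connected by an edge outside the cycle and no three of its vertices form a facet — essentially an "induced, non-contractible-in-the-obvious-way" cycle, the simplicial dual of the simple-polytope belts used to detect flagness. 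The key structural fact I would use is that $\lk_{\partial P} w$ is a cycle of length $\ge 5$ which, by flagness, is an induced cycle (no chords — a chord would be a missing face together with $w$), hence a belt of length $\ge 5$ around $w$.

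The heart of the argument is then to show that inside this belt $C = \lk w = (u_1,\dots,u_k)$, $k\ge5$, there is an edge $\{u_i,u_{i+1}\}$ of $\partial P$ that is contractible. Its two common neighbours in $\partial P$ include $w$ and $u_{i-1}$ or $u_{i+2}$... wait — the two facets on $\{u_i,u_{i+1}\}$ are $\{w,u_i,u_{i+1}\}$ and $\{u_i,u_{i+1},x\}$ for some $x\notin C$ (since $C$ is induced, $u_{i-1}$ and $u_{i+2}$ are not adjacent to both). So the two "forced" common neighbours are $w$ and $x$; contractibility fails exactly if $u_i$ and $u_{i+1}$ have a \emph{third} common neighbour $y$, which by flagness of $P^3$ forces $y$ to be such that $\{u_i,u_{i+1},y\}$ is a facet — contradiction, two facets on an edge — \emph{unless} one checks the sphere is small. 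So actually every edge of the belt $C$ is contractible in $P^3$, and I would then only need to verify that contracting one of them does not accidentally land on the octahedron from a non-flag polytope or create a missing face: after contraction the new vertex $v$ (merging $u_i,u_{i+1}$) has $\lk v$ obtained by splicing $\lk u_i$ and $\lk u_{i+1}$, and any new missing face through $v$ would pull back (via the simplicial map $K\to K/e$ from the Definition) to a missing face or a prohibited edge in $P^3$. The main obstacle I expect is precisely this last verification — controlling whether the contraction can create a missing face not present before — and handling the boundary case where $P^3$ has few vertices so that the "third common neighbour" analysis degenerates; this is where I would lean on Steinitz's theorem and the connectivity argument already used in the proof of the Proposition to certify that the contracted complex is still a genuine flag simplicial $3$-polytope, and conclude by induction.
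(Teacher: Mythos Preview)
Your characterization of which edges can be contracted while remaining flag is where the argument breaks down. The condition $|\lk v_1 \cap \lk v_2| = 2$ is the \emph{link condition}; it guarantees only that $K/e$ is again a simplicial sphere, and by the Proposition it holds automatically for \emph{every} edge of a flag $K$. The correct condition for $K/e$ to remain flag is that $e$ lie in no \emph{belt} in the paper's sense, i.e.\ no induced $4$-cycle. Your paragraph about a hypothetical third common neighbour $y$ of $u_i,u_{i+1}$ correctly re-derives the link condition, but says nothing about induced $4$-cycles through $\{u_i,u_{i+1}\}$: such a cycle $u_i,u_{i+1},a,b$ requires only that $a\in\lk u_{i+1}\setminus\lk u_i$ and $b\in\lk u_i\setminus\lk u_{i+1}$ with $\{a,b\}\in K$, and nothing in your hypothesis $\deg w\ge 5$ rules this out. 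You do pull back a would-be missing face of $K/e$ at the end, but the pull-back is precisely such a belt (not ``a missing face or a prohibited edge'' of $K$), and you give no argument that some edge of $\lk w$ avoids all belts. You explicitly flag this verification as ``the main obstacle'', and indeed it is the whole content of the theorem.

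The paper proceeds essentially in the contrapositive direction. Rather than hunting for a belt-free edge near a high-degree vertex, it assumes $K$ is \emph{minimal} (every edge lies in some belt) and shows, by a nested-belt argument (Lemma~\ref{square}), that this forces the existence of a vertex $w$ of degree exactly~$4$. Then two belts through $w$ --- one being $\lk w$ itself, the other built from $w$ and the two remaining neighbours --- must share a second common vertex $v$, and flagness forces every vertex of $\lk w$ to be adjacent to both $w$ and $v$, leaving no room for further vertices: $K$ is the octahedron.
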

\begin{cor}
The vertex of graph $\Gamma$ corresponding to octahedron is the only initial vertex of graph $\Gamma$.
\end{cor}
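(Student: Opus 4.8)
The plan is to derive the corollary directly from Theorem~\ref{edge_collapse_theorem}, the real work having already been done there, together with one elementary base-case observation. Recall that, in the terminology fixed above, an \emph{initial vertex} of $\Gamma$ is a flag simplicial $3$-polytope none of whose edges can be contracted within the class of flag simplicial $3$-polytopes, i.e.\ a flag $P$ with $P/e$ not a flag simplicial $3$-polytope for every edge $e$.

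First I would check that the octahedron is indeed an initial vertex. Contracting an edge lowers the vertex count by one, so from the octahedron one lands on a complex with $5$ vertices; by the Proposition it is still a simplicial sphere, hence a simplicial $3$-polytope by Steinitz's theorem, but it cannot be flag, because the only simplicial $3$-polytopes on at most $5$ vertices are $\partial\Delta^3$ and the bipyramid over a triangle, each of which has an empty triangle (a missing $2$-face). Equivalently, the octahedron is the flag simplicial $3$-polytope with the least number of vertices, $6$. So no edge of the octahedron can be contracted inside the class of flag simplicial $3$-polytopes, and its vertex is an initial vertex of $\Gamma$.

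Next I would show that no other flag simplicial $3$-polytope is initial. Let $P$ be flag with more than $6$ vertices. By Theorem~\ref{edge_collapse_theorem} it is reduced to the octahedron by a sequence of edge contractions running inside the class of flag simplicial $3$-polytopes; equivalently, the proof of that theorem furnishes, for every flag simplicial $3$-polytope other than the octahedron, an edge $e$ with $P/e$ again a flag simplicial $3$-polytope, which is then iterated. Since $P$ is not the octahedron this sequence is non-empty, so its first step contracts some edge $e$ of $P$ to a flag simplicial $3$-polytope $P/e$; hence $P$ has a contractible edge and is not an initial vertex. Combining the two paragraphs, the octahedron is the only initial vertex of $\Gamma$.

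I do not expect a genuine obstacle at this level: the hard content is Theorem~\ref{edge_collapse_theorem} itself. The only point needing a line of justification is the base case in the second paragraph — that the octahedron really is minimal — which reduces to the classical fact that every simplicial $3$-polytope on at most $5$ vertices fails to be flag, so an edge contraction can never carry the octahedron to another flag simplicial $3$-polytope.
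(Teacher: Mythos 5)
Your argument is correct and is essentially the paper's: the corollary is treated there as an immediate consequence of Theorem~\ref{edge_collapse_theorem}, whose proof (via Lemma~\ref{square}) shows exactly that every minimal flag sphere is the octahedron, i.e.\ every non-octahedral flag sphere admits a flag-preserving edge contraction; your explicit check that the octahedron itself is minimal is left implicit in the paper but is a worthwhile addition. One harmless slip: $\partial\Delta^3$ has no empty triangle (its only missing face is the full $4$-clique, which still makes it non-flag), but this does not affect your proof, since contracting an edge of the octahedron yields a $5$-vertex sphere, necessarily the triangular bipyramid, which does contain an empty triangle.
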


\begin{defn}
We say that vertex set $\{v_1,v_2,v_3,v_4\}$ of simplicial polytope forms a \emph{belt} $\square$, if simplicial subcomplex generated by this set is boundary of square.
\end{defn}

\begin{lem}
Let $K^2$ be a simplicial 2-sphere and $e$ be some its edge. Simplicial sphere $K^2/e$ is flag iff the edge $e$ is not contained in any belt of $K^2$.
\end{lem}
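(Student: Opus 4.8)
The plan is to prove both implications by contraposition, using throughout the contraction map $\phi\colon K^2\to K^2/e$, which identifies the two endpoints of $e=\{v_1,v_2\}$ to a single vertex $v$ and is bijective on all other vertices; the basic bookkeeping fact is that a $3$-element vertex set of $K^2/e$ spans a face of $K^2/e$ exactly when one of its $\phi$-preimages spans a face of $K^2$.

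First I would show: if $e$ lies in a belt, then $K^2/e$ is not flag. Since $e$ is an edge of $K^2$ whose endpoints both belong to the belt, and the two diagonals of the belt's $4$-cycle are non-edges, $e$ must be one of the four edges of that cycle; write the belt as $v_1v_2v_3v_4$ in cyclic order. Contracting $e$, the new vertex $v$ inherits the edges $v_2v_3$ and $v_1v_4$, so $v$ is adjacent to both $v_3$ and $v_4$; together with the edge $v_3v_4$ this makes $\{v,v_3,v_4\}$ a $3$-clique of $K^2/e$. It is not a face, because its only possible preimages $\{v_1,v_3,v_4\}$ and $\{v_2,v_3,v_4\}$ fail to be faces of $K^2$: the pairs $v_1v_3$ and $v_2v_4$ are non-edges. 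Hence $K^2/e$ has a missing face.

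For the converse, I would start from a missing face $\tau$ of $K^2/e$ (a $3$-clique that is not a face) and reconstruct a belt through $e$. First, $v\in\tau$: otherwise $\tau$ lifts bijectively under $\phi$ to a $3$-clique of $K^2$ disjoint from $e$, which --- $K^2$ being flag, as in the situation of interest --- is a face of $K^2$ mapping onto $\tau$, a contradiction. So $\tau=\{v,b,c\}$ with $b\neq c$ vertices of $K^2$, and $bc$ is an edge of $K^2$; pulling the edges $vb,vc$ back through $\phi$, each of $b,c$ is adjacent in $K^2$ to $v_1$ or to $v_2$. Since $\tau$ is not a face, neither $\{v_1,b,c\}$ nor $\{v_2,b,c\}$ is a face of $K^2$, so flagness of $K^2$ forbids $v_1$ being adjacent to both $b$ and $c$, and likewise $v_2$; combining this with the preceding sentence forces (after possibly swapping $v_1$ and $v_2$) that $v_1b,\,v_2c$ are edges while $v_1c,\,v_2b$ are not. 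The induced subcomplex of $K^2$ on the four vertices $v_1,v_2,b,c$ is then exactly the $4$-cycle $v_1\,v_2\,c\,b$: the edges $v_1v_2\,(=e),\,v_2c,\,cb,\,bv_1$ are present, the diagonals $v_1c,\,v_2b$ are absent, and every triple of these vertices misses an edge --- so $\{v_1,v_2,b,c\}$ is a belt containing $e$.

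I expect the converse to be the delicate part, and within it the crux is the case analysis on how the three edges of the bad triangle $\tau$ pull back through the non-injective map $\phi$: once $\tau$ is known to use the merged vertex $v$, one must check that flagness of $K^2$ rules out every adjacency pattern between $\{v_1,v_2\}$ and $\{b,c\}$ except the crossing one, and that the crossing pattern really does give a chordless $4$-cycle with no triangular face. The seemingly dangerous case in which $b$ or $c$ is a common neighbour of $v_1$ and $v_2$ --- the very configuration that destroys sphericity in the preceding Proposition --- is absorbed automatically: it would make one of $\{v_1,b,c\},\{v_2,b,c\}$ a clique, hence (by flagness) a face, hence $\tau$ a face.
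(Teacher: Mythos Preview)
Your proof is correct and follows essentially the same approach as the paper: both argue by contraposition, identifying belts through $e$ with missing triangles of $K^2/e$ via the contraction map. Your case analysis in the converse direction is more explicit than the paper's (which simply asserts that one of $v_3,v_4$ lies in $\lk_K v_1\setminus\lk_K v_2$ and the other in $\lk_K v_2\setminus\lk_K v_1$), and your final paragraph correctly isolates the key step. One minor point: you begin the converse by taking a $3$-clique that is not a face, whereas the paper takes a \emph{minimal} missing face and observes it must have size~$3$; your formulation is fine because $K^2/e$, having at least five vertices, cannot be the tetrahedron boundary, which is the only $2$-sphere whose failure of flagness is witnessed solely by a $4$-clique.
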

\begin{cor}
Let $v_K$ be a vertex of $\Gamma$ corresponding to simplicial sphere $K^2$. Then, input degree of $v_K$ in graph $\Gamma$ is not greater than number of edges of complex $K^2$ not contained in any belt.
\end{cor}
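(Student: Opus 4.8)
The plan is to read this off the Lemma together with the combinatorial structure of $\Gamma$. First recall that, by the Proposition, flagness of $K^2$ guarantees that $K^2/e$ is again a simplicial $2$-sphere for \emph{every} edge $e$; so for each edge $e$ the complex $K^2/e$ is at least a simplicial $2$-sphere (hence a $3$-polytope), and the only remaining question is whether it is flag, equivalently whether it is a vertex of $\Gamma$. Next I would pin down which edges of $\Gamma$ are incident to $v_K$ on its input side. An input edge at $v_K$ is a covering relation of the poset whose larger element is $K^2$, and I claim every such cover is realised by a single edge contraction. Indeed, a single edge contraction lowers the number of vertices by exactly one; hence if $K^2/e$ were flag and some flag polytope $R$ satisfied $K^2 \prec R \prec K^2/e$, concatenating the two chains of contractions would express $K^2/e$ as a composition of at least two edge contractions applied to $K^2$, which is impossible since $K^2/e$ has exactly one vertex fewer than $K^2$. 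So, when $K^2/e$ is flag it is an immediate predecessor of $v_K$ in $\Gamma$, and conversely every immediate predecessor of $v_K$ has the form $K^2/e$. Thus the input-neighbours of $v_K$ are exactly the pairwise non-isomorphic flag polytopes occurring among the complexes $K^2/e$.

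Now invoke the Lemma: $K^2/e$ is flag if and only if the edge $e$ lies in no belt of $K^2$. Consequently the assignment $e \mapsto K^2/e$ restricts to a \emph{surjection} from the set of edges of $K^2$ that lie in no belt onto the set of input-neighbours of $v_K$. Since the input degree of $v_K$ equals the cardinality of this target set, it is at most the cardinality of the source set, i.e.\ at most the number of edges of $K^2$ contained in no belt. That is exactly the assertion.

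I do not expect a serious obstacle, since the substantive content sits in the already-established Lemma; the mildly delicate point --- and the reason the bound is an inequality rather than an equality --- is that $e \mapsto K^2/e$ need not be injective: two distinct edges of $K^2$ may have combinatorially isomorphic contractions, most visibly a pair of edges lying in a common orbit of $\mathrm{Aut}(K^2)$, and such a pair contributes only one input edge to $\Gamma$. The only bookkeeping worth flagging is to fix the orientation convention on the Hasse graph $\Gamma$ so that ``the input side of $v_K$'' really does correspond to contracting an edge of $K^2$ (consistently with the octahedron being the unique initial vertex of $\Gamma$); with that settled the argument above is complete, and for the stated inequality one need not even identify the coincidences, as surjectivity of the map already suffices.
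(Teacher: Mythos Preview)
Your proposal is correct and follows the same approach as the paper: deduce the bound directly from the Lemma via the surjection $e\mapsto K^2/e$ from belt-free edges onto the input-neighbours of $v_K$. The paper in fact gives no separate argument for this Corollary---the proof block that follows it is the proof of the Lemma, and the Corollary is treated as immediate---so your write-up is more detailed than the original, in particular in spelling out why a single contraction realises a covering relation (via the vertex count) and why the bound is only an inequality (possible non-injectivity of $e\mapsto K^2/e$).
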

\begin{proof} 
The idea of the proof is to show that belts in $K$ are preimages of missing faces of $K/e$ with respect to simplicial map $K\to K/e$.

If the edge $e=\{v_1,v_2\}$ is contained in some belt $\{v_1,v_2,v_3,v_4\}$ in $K$, then $K/e$ contains a missing face $\{v,v_2,v_3\}$.

Assume that after contaction of edge $e=\{v_1,v_2\}\subset K$ into the vertex $v$ obtained complex $K/e$ is not flag. Consider minimal missing face $V=\{v_i\}_I$ of complex $K/e$.  From the minimality of $V$ follows that $|V| = 3$, and since complex $K$ is flag, then $v\in V$. Let $V=\{v, v_3,v_4\}$, then $v_3,v_4\in \lk_{K}v_1 \cup \lk_{K}v_2$, but since  $V\notin K/e$, one of the vertices $v_3$ and $v_4$ is contained in $\lk_{K}v_1\setminus\lk_{K}v_2 $ and another vertex is contained in $\lk_{K}v_2\setminus\lk_{K}v_1$. Therefore, $\{v_1,v_2,v_3, v_4\}$ is a belt containing edge $e$.
\end{proof}

\begin{lem}\label{square}
Let $K^2$ be a minimal flag simplicial 2-sphere. Then, there exists a vertex $w\in K^2$, such that its link is a boundary of square.
\end{lem}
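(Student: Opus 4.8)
The plan is to argue by a counting/discharging argument on the simplicial 2-sphere $K^2$. Since $K^2$ is minimal, by the previous lemma every edge of $K^2$ lies in some belt $\square$; equivalently, for every edge $e=\{v_1,v_2\}$ the two vertices $v_1,v_2$ have, besides the two common neighbours forming the triangles on $e$, at least one more common neighbour. Since $K^2$ is flag, it has no $3$-cycle that is a missing face, so in particular the minimal degree is at least $4$ (a vertex of degree $3$ would have its link a $3$-cycle bounding a missing triangle, contradicting flagness — or, more directly, such a vertex is contained in an obvious belt situation which would still contract). I would first record these two structural facts: (i) $K^2$ is flag, hence every vertex link is an induced cycle of length $\ge 4$ with no chords; (ii) every edge lies in a belt, which says that adjacent vertices always share a "third" neighbour beyond the two forced ones.

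Next I would look at a vertex $w$ of minimal degree $d=\deg w$, and aim to show that if $d\ge 5$ for all vertices then we reach a contradiction with Euler's formula, so some vertex has degree exactly $4$; then I must upgrade "$\deg w = 4$" to "$\lk w$ is a $4$-cycle", which is automatic since the link of any vertex in a simplicial $2$-sphere is a cycle whose length equals the degree, and flagness guarantees this $4$-cycle is induced, i.e. it is genuinely the boundary of a square. The Euler-formula step: with $V$ vertices, $E$ edges, $F=2E-2V+\ldots$ faces, $2E=\sum \deg v$; if every degree were $\ge 5$ then $E \ge \tfrac52 V$, and combined with $F = 2E/3$ (each face a triangle, each edge in two faces) and $V - E + F = 2$ one gets $V - E + \tfrac23 E = 2$, i.e. $V - \tfrac13 E = 2$, so $E = 3V-6$, forcing the average degree below $6$ — this alone only rules out all degrees $\ge 6$, giving a vertex of degree $\le 5$. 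To push down to degree $4$ I expect to need the belt hypothesis: a vertex $w$ of degree $5$ whose link is a (chordless, by flagness) $5$-cycle $v_1v_2v_3v_4v_5$ — I would examine the edges $\{w,v_i\}$ and the belts through them, deriving enough coincidences among second-neighbourhoods that either a chord appears in some link (contradicting flagness) or a vertex of degree $4$ is exposed elsewhere.

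The step I expect to be the main obstacle is precisely eliminating the degree-$5$ case: Euler's formula permits $5$-regular-looking spheres (the icosahedron is $5$-regular, flag, and minimal-looking at first glance), so the argument cannot be purely a degree count — it must genuinely use that \emph{every} edge lies in a belt, which is a much stronger constraint than flagness. I would try to show that in a flag $2$-sphere with all degrees $\ge 5$ in which every edge lies in a belt, one can find a vertex $w$ and two non-adjacent neighbours of $w$ with too many common neighbours, producing a chord in some link; alternatively, a more global approach: contract a carefully chosen edge anyway (it lies in a belt, so $K/e$ is not flag, but it is still a simplicial sphere — wait, no, by Proposition it may fail to be a sphere) — so I would instead stay within $K^2$ and argue combinatorially. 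As a fallback, if the direct discharging is messy, one can invoke the characterisation of flag $2$-spheres as those obtained from the boundary of the octahedron by repeated "vertex splits / stellar-type" moves and track the appearance of a square link; but I would prefer the self-contained discharging argument, assigning charge $\deg v - 6$ to each vertex (total $-12$ by Euler) and showing that if no vertex has degree $4$ then the belt condition forces enough charge to be non-negative everywhere, a contradiction.
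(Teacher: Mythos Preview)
Your proposal has a genuine gap at exactly the point you flag as the ``main obstacle'': you never eliminate the degree-$5$ case. The Euler step is fine and gives a vertex of degree $\le 5$, but after that you only list possible strategies (local analysis of belts through $\{w,v_i\}$, discharging with charge $\deg v - 6$, appeal to a structural characterisation) without carrying any of them out. The discharging suggestion in particular is not made into an actual rule: you would need to specify how the condition ``every edge lies in a belt'' lets a degree-$5$ vertex push its $-1$ charge somewhere, and it is not at all clear that it does. The fallback via ``flag $2$-spheres are obtained from the octahedron by vertex splits'' is circular, since that statement is essentially what Theorem~\ref{edge_collapse_theorem} asserts and the present lemma is the engine of its proof. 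Incidentally, your test case is not a counterexample to worry about: the icosahedron is \emph{not} minimal, because an edge $\{T,u_1\}$ from the apex to the upper pentagon lies in no induced $4$-cycle, so it can be contracted to a flag sphere --- but this observation does not substitute for an argument.

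The paper bypasses degree counting entirely. It fixes any vertex $v$, and among all belts through $v$ chooses one, $\square_0$, that is extremal in the sense that one of the two discs it bounds, say $W_0^1$, cannot be shrunk further by another belt through $v$. Then for every interior vertex $w'$ of $W_0^1$ adjacent to $v$, the belt through the edge $\{v,w'\}$ (which exists by minimality of $K$) must exit into $W_0^2$, hence must cross $\square_0$ in $v$ and in the vertex $w$ opposite $v$; this forces $w'$ to be adjacent to $w$ as well. Flagness then pins down the interior of $W_0^1$ to a single vertex whose link is the $4$-cycle $\square_0$. This argument uses the belt hypothesis directly to \emph{construct} a degree-$4$ vertex, rather than trying to rule out degree $5$ globally; that is the missing idea in your proposal.
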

\begin{proof}
Each belt $\square$ divides sphere $K$ into simplicial balls $W^1$ and $W^2$.  Consider an arbitrary vertex $v\in K$. 

There exists a belt $\square_0$ containing  $v$, such that every belt $\square'$ containing $v$ and intersecting interior of $W_{0}^1$ intersects interior of $W_{0}^2$. Indeed, consider some belt $\square_1$ containing $v$. It divides simplicial sphere $K$ into simplicial balls $W_{1}^1$ and $W_{1}^2$.  If the belt $\square_2$ is contained in $W_{1}^1$, then consider $\square_2$. It divides $K$ into $W_{2}^1$ and $W_{2}^2$ and $W_{2}^1 \subset W_{1}^1$. Further we continue to choose $\square_i$ such that $W_{i}^1 \subset W_{i-1}^1$. On some step choosing new belt will be not possible and the last chosen belt $\square_0$ will satisfy the required property.  

Let $w\in \square_0$ and $\{v,w\}\notin K$. Each edge $\{w', v\}$ with $w'$ from interior $W_{0}^1$ is contained in some belt $\square_{w'}$ intersecting interior $W_{0}^2$. Then, $\square_0 \cap\square_{w'}=\{v, w\}$. Therefore, all vertices from $W_{0}^1\cap\lk_K v$ are adjacent to vertex $w$. Since $K$ is a flag simplicial 2-sphere,  $W_{0}^1$ doesn't have other vertices. Since $W_{0}^1$ satisfies required property, then $|W_{0}^1|=1$, otherwise there exists a belt containing internal vertex of $W_{0}^1$. Therefore $\lk_K w=\square_0$.
\end{proof}

\begin{proof}[Proof of Theorem \ref{edge_collapse_theorem}]
Let $K$ is minimal flag simplicial 2-sphere. Prove that $K$ is a boundary of octahedron.
Let $w$ be a vertex from Lemma \ref{square}. Consider a belt $\square_1$ containing $w$ and a belt $\square_2$ containing $w$ and vertices from $\lk_{\partial K}w\setminus \square_1$. Belts $\square_1$ and $\square_2$ have common vertex $w$, therefore they have one more intersection point $v$. Thus, all the vertices from $\lk_K w$ are adjacent to both $w$ and $v$. Since $K$ is flag simplicial 2-sphere, it doesn't have other vertices, and, therefore is a boundary of octahedron. 
\end{proof}

\begin{lem}
Let polytope $Q$ be obtained from flag simple polytope $P$ by cutting a face $F$ into faces $F_1$ and $F_2$. Polytope $Q$ is flag if and only if neither $F_1$ nor $F_2$ is triangle.
\end{lem}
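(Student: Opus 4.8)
\emph{Proof proposal.} The plan is to dualize. Write $K=\partial P^{*}$ and $L=\partial Q^{*}$ for the simplicial $2$-spheres dual to the simple polytopes $P$ and $Q$; by definition $Q$ is flag iff $L$ is flag, while $K$ is flag by hypothesis. Under the dictionary set up above, cutting $F$ into $F_{1},F_{2}$ amounts to splitting the vertex $v\in K$ dual to $F$ into an edge $e=\{v_{1},v_{2}\}$, with $v_{i}$ dual to $F_{i}$, so that $K=L/e$; moreover $F_{i}$ is a triangle exactly when $\deg_{L}v_{i}=3$, i.e. when $\lk_{L}v_{i}$ is a $3$-cycle. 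So the lemma becomes: \emph{if $K=L/e$ is a flag simplicial $2$-sphere, then $L$ is flag iff both cycles $\lk_{L}v_{1}$, $\lk_{L}v_{2}$ have at least four vertices.} I would first fix the local picture at $e$: in a simplicial $2$-sphere the edge $e$ lies in exactly two triangles $\{v_{1},v_{2},a\}$ and $\{v_{1},v_{2},b\}$; deleting $v_{2}$ from $\lk_{L}v_{1}$ leaves an arc $\gamma_{1}$ from $a$ to $b$, deleting $v_{1}$ from $\lk_{L}v_{2}$ leaves an arc $\gamma_{2}$ from $a$ to $b$, and $\lk_{K}v$ is the cycle $\gamma_{1}\cup\gamma_{2}$ glued along $\{a,b\}$. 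I would also use two elementary facts: a simplicial $2$-sphere is flag iff it has no missing $2$-face, with the sole exception of $\partial\Delta^{3}$ (which is not flag), and here $L\ne\partial\Delta^{3}$ since otherwise $L/e$ would have only three vertices; and, since the contraction map $L\to K$ is bijective on simplices disjoint from $\{v_{1},v_{2}\}$ resp.\ $\{v\}$, a missing $2$-face of $L$ avoiding $\{v_{1},v_{2}\}$ is also a missing $2$-face of $K$.

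For the ``only if'' direction I argue by contrapositive: suppose say $\lk_{L}v_{1}=\{a,v_{2},b\}$. Then $\{v_{1},v_{2},a\}$, $\{v_{1},v_{2},b\}$, $\{v_{1},a,b\}$ are faces, hence $\{v_{2},a\}$, $\{v_{2},b\}$, $\{a,b\}$ are edges of $L$; if $\{v_{2},a,b\}$ were also a face, the four triangles on $\{v_{1},v_{2},a,b\}$ would form a $2$-sphere inside $L$, forcing $L=\partial\Delta^{3}$, which is excluded. So $\{v_{2},a,b\}$ is a missing $2$-face of $L$, and $L$, hence $Q$, is not flag.

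For the ``if'' direction assume both links have $\ge 4$ vertices; it suffices to show $L$ has no missing $2$-face $T$. If $T$ avoids $\{v_{1},v_{2}\}$ it is a missing $2$-face of $K$, contradicting flagness of $K$. If $T\supseteq\{v_{1},v_{2}\}$ this is impossible: since $L/e$ is a genuine simplicial $2$-sphere the link condition $\lk_{L}v_{1}\cap\lk_{L}v_{2}=\{a,b\}$ holds and $\{v_{1},v_{2},a\},\{v_{1},v_{2},b\}$ are faces. So $T=\{v_{1},x,y\}$ with $v_{2}\notin T$ (up to swapping $v_{1},v_{2}$). Then $x,y$ lie on $\gamma_{1}\subset\lk_{K}v$ and $\{x,y\}$ is an edge of $K$, and the key point is that $x,y$ are \emph{non}-consecutive on the cycle $\lk_{K}v$: when $\{x,y\}\ne\{a,b\}$, consecutiveness of two vertices of $\gamma_{1}$ on $\lk_{K}v$ is the same as consecutiveness on $\gamma_{1}$, i.e.\ the same as $\{v_{1},x,y\}$ being a face of $L$, which fails; when $\{x,y\}=\{a,b\}$, the fact that $\{v_{1},a,b\}$ is not a face forces $\deg_{L}v_{1}>3$, so together with $\deg_{L}v_{2}\ge 4$ both arcs $\gamma_{1},\gamma_{2}$ have interior vertices and $a,b$ are again non-consecutive on $\lk_{K}v$. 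Hence $\{v,x,y\}$ is a missing $2$-face of $K$, a contradiction; therefore $L$ is flag.

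The main obstacle is this ``if'' direction: one has to rule out missing $2$-faces of $L$ in every position relative to $e$, and the delicate step is the bookkeeping that promotes a missing $2$-face of $L$ touching exactly one endpoint of $e$ to a missing $2$-face of $K$ — which is precisely where both hypotheses, that $F_{1},F_{2}$ are not triangles and that $K$ is flag, are used simultaneously. The case $T=\{v_{1},v_{2},x\}$ costs nothing once one invokes the link condition, which is automatic because $L/e$ is assumed to be an honest simplicial sphere.
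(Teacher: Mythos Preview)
Your argument is correct, but it follows a genuinely different route from the paper. You dualize to the simplicial side, set $K=L/e$, and then rule out missing triangles of $L$ by an exhaustive case split on their position relative to $\{v_{1},v_{2}\}$; the delicate case $T=\{v_{1},x,y\}$ is handled by pushing $T$ down to a missing triangle $\{v,x,y\}$ in $K$ via the local description $\lk_{K}v=\gamma_{1}\cup\gamma_{2}$. The paper instead stays on the simple-polytope side and argues directly with the flag condition: it takes a minimal collection $\mathcal F$ of pairwise intersecting facets of $Q$ with empty intersection, observes that $\mathcal F$ must contain $F_{1}$ (say), replaces $F_{1}$ by $F=F_{1}\cup F_{2}$ to get a collection in $P$ which (by flagness of $P$) has a common point, and from this deduces that every member of $\mathcal F$ meets $F_{2}$, forcing $F_{2}$ to be a non-flag polygon, i.e.\ a triangle. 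The paper's proof is shorter and more conceptual, exploiting flagness as a Helly-type property of facets; your approach is more explicit and has the virtue of making transparent the link with the earlier belt lemma (indeed, your case analysis is essentially the dual of showing that the new edge $e$ lies in a belt of $L$ precisely when one of the $F_i$ is a triangle).
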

\begin{proof}
Necessity is clear, since simple flag polytope doesn't have triangle faces. Assume, polytope $Q$ is not flag and $\mathcal{F} = \{F_i\}_I$ is a minimal collection of pairwise intersecting facets having empty intersection. Obviously, $\mathcal{F}\cap\{F_1,F_2\}\neq\emptyset$. Without loss of generality assume that $F_1\in \mathcal{F}$. If we replace set $F_1$ by $F_1\cup F_2$, then obtained set collection will have nonempty itersection, since $P$ is flag. Therefore, all the facets from $\mathcal{F}$ intersect $F_2$,  and $F_2$ is a simple 2-polytope which is not flag, then $F_2$ is a triangle. 
\end{proof}

\begin{cor}
Let $v_K$ be a vertex of $\Gamma$, corresponding flag simplicial sphere $K$. Let $r_k$ be a number of $k$-angle 2-faces of dual simple polytope $P_K$. Then, output degree of vertex $v_K$ is not greater, then $$\sum_k r_k \frac{k(k-3)}{2}.$$
\end{cor}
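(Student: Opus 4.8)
The plan is to pass to the dual simple polytope $P_K$ and to identify each out-neighbour of $v_K$ with a way of cutting a $2$-face of $P_K$ along a diagonal. First I would note that an out-neighbour of $v_K$ in $\Gamma$ is a flag simplicial $3$-polytope $C$ from which $K$ is obtained by a single edge contraction. By the duality recalled in the introduction --- an edge contraction of a simplicial polytope is dual to merging two adjacent facets of the dual simple polytope by erasing their common edge --- this is the same as saying that $P_C$ is a flag simple polytope obtained from $P_K$ by cutting one facet $F$ into two facets $F_1,F_2$, with all other facets unchanged. Since such a cut introduces no new vertex on the boundary of $F$, the new edge $F_1\cap F_2$ is a diagonal of the polygon $F$; conversely, each diagonal of each $2$-face of $P_K$ determines exactly one such cut, and by the preceding lemma that cut produces a flag polytope precisely when neither of the two pieces is a triangle.

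Consequently there is a surjection from the set of diagonals of $2$-faces of $P_K$ whose cut produces a flag polytope onto the set of out-neighbours of $v_K$, namely sending such a diagonal to the polytope produced by its cut; this need not be injective, since different cuts can yield combinatorially isomorphic polytopes. Hence the output degree of $v_K$ is at most the total number of diagonals of the $2$-faces of $P_K$. Since a convex $k$-gon has $\frac{k(k-1)}{2}-k=\frac{k(k-3)}{2}$ diagonals, grouping the $2$-faces of $P_K$ by their numbers of sides shows this total is $\sum_k r_k\,\frac{k(k-3)}{2}$, which is exactly the asserted bound.

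I do not expect a real obstacle: the statement is essentially this diagonal count combined with the dualities already established. The one point deserving a sentence is the identification in the first step --- one must check that undoing a single edge contraction corresponds on the dual side to precisely the \emph{cut a facet} operation and to nothing more general; in particular a hypothetical cut introducing a new vertex on the boundary of $F$ would simultaneously alter an adjacent facet and hence would amount to more than one contraction, so it does not occur. If one wants a sharper estimate, the preceding lemma lets one discard the diagonals that cut off a triangle, giving the stronger bound $\sum_{k\ge 5} r_k\,\frac{k(k-5)}{2}$; but the crude count of all diagonals already yields the inequality as stated.
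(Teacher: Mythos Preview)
Your overall strategy---pass to the dual simple polytope and count the admissible cuts of each facet---is exactly the paper's. But you have misidentified what a cut looks like, and your justification of the key step is wrong even though the final number happens to agree.

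The new edge $e=F_1\cap F_2$ is \emph{not} a diagonal of the polygon $F$. In a simple $3$-polytope every vertex lies on exactly three facets, so each endpoint of $e$ is a \emph{new} vertex lying in the interior of an edge of $F$; that edge is split in two, and the neighbouring facet across it gains one side. Your sentence ``a hypothetical cut introducing a new vertex on the boundary of $F$ \ldots\ would amount to more than one contraction'' is therefore exactly backwards: this is precisely the inverse of a single edge contraction (dually, the two triangles containing the contracted edge collapse, which on the simple side means the two neighbouring facets each lose one side). Cutting along an honest diagonal would, by contrast, make the endpoints $4$-valent and destroy simplicity.

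The correct count is thus over unordered pairs of \emph{edges} of $F$ on which the endpoints of $e$ land. There are $\binom{k}{2}$ such pairs; the preceding lemma forbids those for which one piece is a triangle, which happens exactly when the two chosen edges are adjacent. That removes $k$ pairs, leaving $\binom{k}{2}-k=\tfrac{k(k-3)}{2}$---numerically the number of diagonals of a $k$-gon, which is why your formula came out right. This also shows that your proposed ``sharper'' bound $\sum_{k\ge 5} r_k\,\tfrac{k(k-5)}{2}$ is spurious: the no-triangle condition from the lemma has already been used to obtain $\tfrac{k(k-3)}{2}$, and there is nothing further to discard.
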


\begin{proof}
Consider simple polytope $P$, dual to simplicial sphere $K$. Let $Q$ be a minimal polytope that is greater, then $P$. Then polytope $Q$ is obtained from $P$ by cutting its face $F$ into faces $F_1$ and $F_2$, which are not triangles. If the face $F$ is a $k$-angle, the number of ways to cut it without triangles is equal to the number of diagonals of $k$-angle, which is 
$$\sum_{i=0}^{k-4}(i+2)=\frac{k(k-3)}{2}$$
\end{proof}

\footnotesize

\small\bigskip
\textsc{Steklov Mathematical Institute,Moscow,Russia}\\
\textsc{Delone Laboratory of Discrete and
Computational Geometry,Yaroslavl State University,Yaroslavl,Russia}\\
\emph{E-mail adress:} \verb"volodinvadim@gmail.com"

\end{document}